\newtheorem{theorem}{Theorem}[section]
\newtheorem{lemma}[theorem]{Lemma}
\newtheorem{corollary}[theorem]{Corollary}
\theoremstyle{definition}
\newtheorem{definition}[theorem]{Definition}
\newenvironment{case}[1]{\innercase}{\endinnercase}
\newcommand{\N}{\mathbb{N}}
\newcommand{\res}{\mathbin{\upharpoonright}}
\newcommand{\range}{\operatorname{ran}}
\renewcommand{\mod}{\text{ }\textrm{mod}\text{ }}
\newcommand{\sequence}[1]{\langle #1 \rangle}
\newcommand{\pair}[1]{\langle #1 \rangle}
\newcommand{\forces}{\Vdash}
\newcommand{\0}{\mathbf{0}}
\newcommand{\RCA}{\mathsf{RCA}_0}
\newcommand{\D}{\mathsf{D}}
\newcommand{\RT}{\mathsf{RT}}
\newcommand{\SRT}{\mathsf{SRT}}
\newcommand{\COH}{\mathsf{COH}}
\newcommand{\In}{0}
\begin{document}

\title{Cohesive avoidance and arithmetical sets}

\author{Damir D. Dzhafarov}
\address{Department of Mathematics\\
University of Notre Dame\\
Notre Dame, Indiana 46556, U.S.A.}
\email{ddzhafar@nd.edu}


\thanks{The author was partially supported by an NSF Postdoctoral Fellowship and by the John Templeton Foundation. He is most grateful to R.~A.~Shore and T.~A.~Slaman, conversations with whom, while visiting the Institute for Mathematical Sciences at the National University of Singapore in the summer of 2011, inspired the results in this article. He is also grateful to D.~R.~Hirschfeldt and J.~L.~Hirst for commenting on this article, especially to the former for pointing out that the proof of Theorem \ref{theorem_main}, originally formulated only for $\Delta^0_2$ operators, in fact works for arbitrary arithmetical operators.}

\maketitle

\begin{abstract}
An open question in reverse mathematics is whether the cohesive principle, $\COH$, is implied by the stable form of Ramsey's theorem for pairs, $\SRT^2_2$, in $\omega$-models of $\RCA$. One typical way of establishing this implication would be to show that for every sequence $\vec{R}$ of subsets of $\omega$, there is a set $A$ that is $\Delta^0_2$ in $\vec{R}$ such that every infinite subset of $A$ or $\overline{A}$ computes an $\vec{R}$-cohesive set. In this article, this is shown to be false, even under far less stringent assumptions: for all natural numbers $n \geq 2$ and $m < 2^n$, there is a sequence $\vec{R} = \sequence{R_0,\ldots,R_{n-1}}$ of subsets of $\omega$ such that for any partition $A_0,\ldots,A_{m-1}$ of $\omega$ arithmetical in $\vec{R}$, there is an infinite subset of some $A_j$ that computes no set cohesive for $\vec{R}$. This complements a number of previous results in computability theory on the computational feebleness of infinite sets of numbers with prescribed combinatorial properties. The proof is a forcing argument using an adaptation of the method of Seetapun showing that every finite coloring of pairs of integers has an infinite homogeneous set not computing a given non-computable set.
\end{abstract}

\section{Introduction}

There is an intricate relationship between the computational strength of sets of natural numbers and their combinatorial properties, and its exploration occupies a prominent role within computability theory. (We refer the reader to Soare \cite{Soare-1987} for a general introduction to the subject.) As a case in point, a number of results have been concerned with the computational feebleness of infinite subsets of a given set, as in the following examples:

\begin{theorem}[Soare \cite{Soare-1969}] There is an infinite set with no infinite subset of strictly higher Turing degree.
\end{theorem}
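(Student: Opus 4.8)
The plan is to build an infinite set $A=\{a_0<a_1<a_2<\cdots\}$ as an increasing union $\bigcup_s F_s$ of finite sets, where $F_{s+1}$ extends $F_s$ by a single element above $\max F_s$, so that $A$ is infinite automatically, while meeting for every $e\in\omega$ the requirement
\[
R_e\colon\quad\text{for every infinite }B\subseteq A,\ \text{if }\Phi_e^B=A\text{ then }B\leq_T A.
\]
Meeting all the $R_e$ suffices: if some infinite $B\subseteq A$ had $B>_T A$ then $A=\Phi_e^B$ for some $e$, and $R_e$ would give $B\leq_T A$, contradicting $B>_T A$. Note that this also forces $A$ to be noncomputable, as it must be, since a computable infinite set has infinite subsets of every Turing degree.

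To meet a single $R_e$ I would keep $A$ sparse and diagonalize as the construction proceeds so that, using $A$ as an oracle, one can recover every infinite $B\subseteq A$ with $\Phi_e^B=A$. Call a string $\sigma\in 2^{<\omega}$ \emph{$A$-admissible} if $\sigma(i)=1$ implies $i\in A$ for all $i<|\sigma|$; the $A$-admissible strings are exactly the restrictions of infinite subsets of $A$, and for $B\subseteq A$ the value $\Phi_e^B(y)$ depends only on the $A$-admissible string $B\res u$, where $u$ is the use. Each time $R_e$ acts, I would look for a number $y$ and two $A$-admissible strings (already admissible, or certain to become so from the commitments made so far) whose $\Phi_e$-computations converge at $y$ and disagree, and exploit the freedom still available in building $A$ to kill one of them as a potential witness, i.e.\ to guarantee $\Phi_e^B(y)\neq A(y)$ for every infinite $B\subseteq A$ extending it. If this eventually kills every potential witness, $R_e$ is met vacuously ($\Phi_e^B\neq A$ for all infinite $B\subseteq A$). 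Otherwise, from some stage on the surviving $A$-admissible computations are \emph{confluent}: for all sufficiently large $y$, any two $A$-admissible strings whose $\Phi_e$-computations converge at $y$ agree below the smaller of the two uses (and hence return a common value, which must be $A(y)$). Deciding, at each stage, which of these two cases will hold, and which values of $A$ to fix accordingly, will need a sufficiently powerful arithmetical oracle.

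The main obstacle is the confluent case, which is where the combinatorial heart of the argument lies: one must verify that confluence really does let $A$ recover $B$, and, harder, that the construction can genuinely enforce confluence against \emph{all} later-appearing $A$-admissible strings, not just those visible at the current stage. Granting confluence, $B\leq_T A$ as follows. To decide whether $a_n\in B$, search with oracle $A$ for a number $y$ and an $A$-admissible string $\sigma$ with $\Phi_e^\sigma(y)\!\downarrow$ and use exceeding $a_n$; since $\Phi_e^B=A$ is total and $A$ is noncomputable (so the use of $\Phi_e^B$ is unbounded), such a $y$ and $\sigma$ exist and are found, and confluence applied to $\sigma$ and the true computation $B\res u'$, where $u'$ is the use of $\Phi_e^B(y)$, forces $\sigma$ and $B$ to agree at $a_n$; thus $a_n\in B\iff\sigma(a_n)=1$. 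What remains is the labor of running all the $R_e$ simultaneously: the values of $A$ frozen on behalf of $R_e$ can spawn new conflicts for lower-priority requirements, so the whole thing is organized as a priority construction in which each $R_e$ acts only finitely often, $A$ is kept infinite, and every requirement is eventually left undisturbed — all executed relative to the arithmetical oracle mentioned above.
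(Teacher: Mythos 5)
The paper you are reading does not prove this theorem; it only quotes it from Soare's 1969 article, so there is no in-text proof to compare against. Judged on its own terms, your reduction to the requirements $R_e$ is correct and the splitting-versus-confluence dichotomy is the right instinct, but there is a real gap at the center. Your ``kill'' step acts on pairs of admissible strings whose computations \emph{disagree in value} at some $y$; the negation of ``such a pair remains'' is therefore only that all admissible convergent computations at each $y$ return a common value. That value-confluence tells you merely that $A(y)$ can be read off from the tree of $A$-admissible strings, which is computable from $A$ --- so you learn nothing about $B$. Your recovery procedure needs the much stronger statement that admissible convergent computations agree \emph{as oracle strings} below their uses, and the killing process does nothing to enforce this: two strings can return the same value at every $y$ while differing at a position $a_n$ below both uses, and there is then no $y$ at which you can choose $A(y)$ to separate them; if both extend to witnesses $B$ with $\Phi_e^B=A$, you would need continuum many $B\leq_T A$, which is absurd, so the construction must actively prevent this and yours does not. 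Even granting use-confluence, the recovery is off: agreement holds only below the \emph{smaller} of the two uses, and the $y$ your search returns may be one at which the true computation $\Phi_e^B(y)$ has use $\leq a_n$, in which case $\sigma(a_n)$ need not equal $B(a_n)$.

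The second gap is structural. In a finite-extension priority construction, each action of $R_e$ disposes only of the witnesses extending one particular finite string, and the class $\set{B \subseteq A : \Phi_e^B = A}$ is $\Pi^0_2$ rather than closed, so no compactness argument guarantees that finitely many kills either exhaust it or force confluence --- yet you also require each $R_e$ to act only finitely often. The standard repair, and the device this paper uses throughout, is to force with Mathias-style conditions $\pair{F,L}$ carrying an infinite reservoir $L$ with $F \subseteq A \subseteq F \cup L$, so that a single step (shrinking $L$) constrains all future elements of $A$ at once; the dichotomy for a requirement is then decided by a tree/compactness argument over finite subsets of the reservoir, exactly as in Lemma \ref{lemma_combinatorial} and Section 3 here. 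You should recast your construction in that framework (or consult Soare's original argument) before attempting the confluence case.
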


\begin{theorem}[Kjos-Hanssen \cite{Kjos-Hanssen-2009}, Theorem 3.8]
There is an infinite Martin-L\"{o}f random set with an infinite subset that computes no Martin-L\"{o}f random set.
\end{theorem}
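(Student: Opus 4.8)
The plan is to build a Martin-L\"{o}f random set $A$ together with an infinite subset $B \subseteq A$ computing no Martin-L\"{o}f random set, obtaining $B$ by a Mathias-style forcing inside $A$: the conditions are pairs $(F,\tilde I)$ with $F$ a finite initial segment of $B$, $\tilde I$ an infinite $A$-computable set with $\max F < \min \tilde I$, and extension meaning that finitely many elements of $\tilde I$ are moved into $F$ while the reservoir is shrunk accordingly. It is convenient to fix $A$ at the outset to be a Martin-L\"{o}f random set of low degree (such sets exist by applying the low basis theorem to the nonempty $\Pi^0_1$ class $\{X : \forall n\ K(X\res n) \ge n - c\}$ for suitable $c$), so that the reservoirs and the search steps below can be handled with a $\emptyset'$ oracle. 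Since a Martin-L\"{o}f random set has no infinite computable subset --- if $C=\{c_0<c_1<\cdots\}$ were computable and contained in $A$ then the uniformly c.e.\ sequence $U_k=\{X:c_0,\dots,c_k\in X\}$ would be a Martin-L\"{o}f test capturing $A$ --- every infinite $B\subseteq A$ is automatically noncomputable, so the conclusion ``$B$ computes no Martin-L\"{o}f random set'' is a genuine restriction.

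The requirements are $\mathcal{R}_e$: if $\Phi_e^B$ is a total $\{0,1\}$-valued function then it is not Martin-L\"{o}f random; a filter for the forcing that is sufficiently generic to meet each $\mathcal{R}_e$ then yields a $B$ computing no Martin-L\"{o}f random set. To meet a single $\mathcal{R}_e$ from a condition $(F,\tilde I)$ I split into cases. If there are a finite $F'\subseteq\tilde I$ and an $x$ such that no extension of $F\cup F'$ within $\tilde I$ makes $\Phi_e(x)$ converge, or makes it converge to a value in $\{0,1\}$, then I pass to $(F\cup F',\tilde I\setminus F')$ and $\mathcal{R}_e$ is met because $\Phi_e^B$ is not the characteristic function of a set. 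Otherwise $\Phi_e^B$ is total along every extension, and I instead drive it into a Martin-L\"{o}f test: running alongside the construction I build an auxiliary test $(V^e_j)_j$ with $\mu(V^e_j)\le 2^{-j}$ and force $\Phi_e^B\in\bigcap_j V^e_j$, so that $\Phi_e^B$ is not Martin-L\"{o}f random by definition.

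The crux is keeping the measure enumerated into $V^e_j$ at most $2^{-j}$, and this is where the freedom to choose how sparsely to populate $B$ is used. I interleave ``totalization'' steps --- extend $F$ minimally, drawing new elements only from a \emph{computably bounded} pool, so that $\Phi_e$ converges on one more input --- with long silent stretches in which nothing is added to $B$, arranging that at stage $n$ the number of distinct possible values of $\Phi_e^B\res n$, taken over all extensions the construction could still produce, is far below $2^n$; enumerating all of these as cylinders into $V^e_j$ once $n$ is large enough then meets the requirement. The main obstacle is exactly this bookkeeping: a priori the reservoir has infinitely many finite subsets and $\Phi_e^B\res n$ could take up to $2^n$ values, so the construction must be organized so that the live set of possibilities stays small and computably enumerable, and one must verify that the sparseness demands imposed by the various $\mathcal{R}_e$ are mutually compatible and do not, in aggregate, keep $B$ finite. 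A secondary point is that $(V^e_j)_j$ must be a genuine Martin-L\"{o}f test and not merely one relative to the oracle driving the construction, which is the reason for restricting to computably bounded candidate pools, so that the possibilities above can be enumerated without reference to $A$ or $\emptyset'$.
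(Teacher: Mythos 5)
This statement appears in the paper only as quoted background---it is Theorem 3.8 of Kjos-Hanssen, cited from \cite{Kjos-Hanssen-2009}---and the paper gives no proof of it, so there is no internal argument to compare yours against; I can only assess the proposal on its own terms.

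Your setup (a low Martin-L\"{o}f random $A$, Mathias conditions $(F,\tilde I)$ with $A$-computable reservoirs, and for each $\mathcal{R}_e$ the dichotomy between forcing partiality and forcing $\Phi_e^B$ into a test) is a sensible opening, and the side observations are correct. But there is a genuine gap exactly at the point you flag as ``the main obstacle'': the Case 2 bookkeeping is the entire mathematical content of the theorem, and the mechanism you describe does not close it. The tension is this. If the construction chooses its extensions deterministically (say, the least $F'$ forcing $\Phi_e(x)\downarrow$), then $\Phi_e^B\res n$ is a single string, but enumerating that string into $V^e_j$ requires the oracle ($A$, or $A'\equiv_T\emptyset'$) driving the construction, so $(V^e_j)_j$ is only an oracle test and $\Phi_e^B$ may perfectly well remain Martin-L\"{o}f random. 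If instead $V^e_j$ enumerates \emph{all} values of $\Phi_e^{B'}\res n$ over the $B'$ the construction could still produce, the measure bound fails: restricting new elements of $B$ to a computably sparse pool $P$ does not bound the number of realized strings of length $n$ by $2^{n-j}$, because the use of $\Phi_e^{B'}\res n$ is not computably bounded---arbitrarily many elements of $P$ may lie below the use, so arbitrarily many finite $D\subseteq P$ remain live, and nothing prevents essentially all $2^n$ strings from being realized. The ``silent stretches'' do not rescue this: while nothing is being added to $B$, the computations $\Phi_e^B(x)$ on new inputs simply need not converge (the Case 2 hypothesis only guarantees convergence after \emph{further} extension), and the moment you extend to obtain convergence you reintroduce the branching you were trying to suppress. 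There is also the unaddressed compatibility issue you mention yourself, namely that infinitely many requirements each demanding their own sparseness must still leave $B$ infinite. Resolving the central tension is the real work of the theorem; the known arguments do it either by a measure-theoretic route (taking the subset of a random set selected by a relatively random set and combining van Lambalgen's theorem with a ``no randomness from a positive-measure class'' argument) or by a probabilistic forcing in which the construction's own choices are randomized so that a genuine Martin-L\"{o}f test can bet on them. As written, your proposal names the obstacle but does not overcome it.
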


\begin{theorem}[Dzhafarov and Jockusch \cite{DJ-2009}, Lemma 3.2]
If $A_0,\ldots,A_{m-1}$ partition $\omega$ and $C$ is any non-computable set, then there is an infinite subset of some $A_j$ that does not compute $C$.
\end{theorem}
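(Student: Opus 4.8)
The plan is to reduce the statement to the following fact, which is in the spirit of Soare's theorem quoted above and can be proved by a forcing construction of the same flavor: for every non-computable set $C$ there is an infinite set $G$ such that no infinite subset of $G$ computes $C$. Granting this, the theorem follows in two lines. Fix such a $G$. Since $G$ is infinite and $A_0,\dots,A_{m-1}$ partition $\omega$, the sets $G\cap A_0,\dots,G\cap A_{m-1}$ partition $G$, so $G\cap A_j$ is infinite for some $j<m$. Then $G\cap A_j$ is an infinite subset of $A_j$, and, being an infinite subset of $G$, it does not compute $C$. The only point to notice is that it is irrelevant whether $G\cap A_j$ is itself computable from $G$; all that is used is that it is one of the infinite subsets of $G$ to which the hypothesis on $G$ applies.

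To build $G$ I would use Mathias forcing: a condition is a pair $(F,X)$ with $F$ finite, $X$ infinite, and $\max F<\min X$, with $(F',X')\le(F,X)$ meaning $F\subseteq F'\subseteq F\cup X$, $X'\subseteq X$, and $\max F'<\min X'$; the generic object is $G=\bigcup F$. Meeting the dense sets of conditions forcing $|F|>n$ makes $G$ infinite. For each index $e$ the key requirement is $R_e$: there is an $n$ such that no finite $D\subseteq G$ has $\Phi_e^D(n)\!\downarrow = C(n)$. If a condition $(F,X)$ forces $R_e$ via a witness $n$, then, since $G\subseteq F\cup X$, no finite subset of $G$ witnesses $\Phi_e^D(n)=C(n)$ either, so by the use principle every infinite $H\subseteq G$ has $\Phi_e^H(n)\ne C(n)$ (undefined or wrong); meeting all the $R_e$ thus yields a $G$ with the desired property.

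The main work—and the step I expect to be the principal obstacle—is the density of the conditions forcing $R_e$. Given $(F,X)$, the natural dichotomy is: either there is an infinite $X'\subseteq X$ and an $n$ such that no finite $D\subseteq F\cup X'$ has $\Phi_e^D(n)\!\downarrow = C(n)$, in which case $(F,X')\le(F,X)$ forces $R_e$ and we are done; or else for every infinite $X'\subseteq X$ and every $n$ there is a finite $D\subseteq F\cup X'$ with $\Phi_e^D(n)\!\downarrow = C(n)$. The claim is that this second alternative forces $C$ to be computable, contradicting the hypothesis on $C$; making this precise is the delicate part. One has to analyse the finite sets $D$ on which $\Phi_e$ disagrees with $C$, argue that in this alternative they cannot themselves be spread out along $X$ in a way that would already have allowed $R_e$ to be forced, and then thin $X$ to a sufficiently simple (ideally computable) infinite subset along which $\Phi_e$ computes $C$ correctly, so that an effective search recovers $C(n)$ for every $n$. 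Checking that this reservoir management does not interfere with the infinitude requirements is the usual technical overhead in cone‑avoidance arguments of this type, but it is precisely there that the real care is needed; the rest is routine forcing bookkeeping.
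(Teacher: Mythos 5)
Your argument reduces the theorem to the following statement: for every non-computable $C$ there is an infinite set $G$ such that \emph{no} infinite subset of $G$ computes $C$. That statement is false, already for $C=\emptyset'$, so the approach collapses at the outset. By a classical theorem of Jockusch (``Ramsey's theorem and recursion theory,'' 1972) there is a \emph{computable} coloring $f\colon[\omega]^3\to 2$ --- namely $f(x,y,z)=0$ iff $\emptyset'_y\res x=\emptyset'_z\res x$ for $x<y<z$ --- every infinite homogeneous set of which computes $\emptyset'$. Given any infinite $G$ whatsoever, apply Ramsey's theorem to $f$ restricted to $[G]^3$ to obtain an infinite homogeneous $H\subseteq G$; then $H$ is an infinite subset of $G$ with $H\geq_T\emptyset'$. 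Hence \emph{every} infinite set has an infinite subset computing $\emptyset'$ (and therefore computing any non-computable $C\leq_T\emptyset'$), so the set $G$ you want to force cannot exist. This also pinpoints why the ``delicate part'' of your density argument is not merely delicate but insurmountable: in your second alternative the reservoir $X$ is an arbitrary infinite set, so there is no effective search over finite $D\subseteq F\cup X$; there may be finite oracles on which $\Phi_e$ converges to the \emph{wrong} value of $C(n)$, with no way to tell them apart; and an arbitrary infinite $X$ need not have any infinite computable subset to thin to (immune sets exist). Your two-line derivation of the theorem from the single-set statement is valid, but the single-set statement is what fails.

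The actual proof of Lemma 3.2 in \cite{DJ-2009} (not reproduced in this paper, which only cites it) must for this reason keep all $m$ cells in play simultaneously: one runs a Mathias-type forcing that builds a candidate subset of each $A_j$ at once over a shared reservoir, and for each $m$-tuple of functionals shows that either disagreement or non-totality can be forced on \emph{some} cell, or else $C$ is computable --- the computation of $C(n)$ searching over finite subsets $E$ of the reservoir and over all $m$-partitions of $E$, and exploiting that the true partition $E=(E\cap A_0)\cup\cdots\cup(E\cap A_{m-1})$ is among those searched. The disjunction over $j$ is decided only at the end and cannot be localized in advance to a single set $G$; that combinatorial point is the real content of the lemma, and it is exactly what your reduction tries to bypass.
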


\noindent The property of not computing the set $C$ above was called ``cone avoidance'' in \cite{DJ-2009}, as it can be described as avoiding the upper cone above $C$ under Turing reducibility. In this article, we shall be interested in a property that can, by analogy, be called ``cohesive avoidance''.

A set $S$ is said to be \emph{cohesive} for a sequence $\vec{R} = \sequence{R_i : i \in \omega}$, or \emph{$\vec{R}$-cohesive}, if it is infinite and for each $i$, either $S \cap R_i$ or $S \cap \overline{R_i}$ is finite. When $\vec{R}$ contains all computably enumerable (c.e.)~sets, the notion of cohesiveness is related to that of maximality (see \cite{Soare-1987}, Sections X.3 and XI.2), which has been of interest in the study of the lattice of c.e.\ sets under inclusion. Various classes of cohesive sets have been investigated, as have the Turing degrees of their members, e.g., in \cite{Cooper-1972}, \cite{Jockusch-1973}, \cite{JS-1993}, and \cite{HJ-1999}.

Every infinite set is, of course, cohesive for itself. More generally, given a finite sequence $\vec{R} = \sequence{R_0,\ldots,R_{n-1}}$, it is easy to build $2^n$ many $\vec{R}$-computable sets, each infinite subset of each of which is $\vec{R}$-cohesive: namely, the $2^n$ Boolean combinations of $R_0,\ldots,R_{n-1}$. Our main result is that the same effect cannot be achieved with fewer than $2^n$ many sets, even if we allow these to be merely arithmetical, rather than computable, in $\vec{R}$, and even if we ask that each infinite subset of each $A_j$ merely compute, rather than be, an $\vec{R}$-cohesive set.

\begin{theorem}\label{theorem_main}
For every $n \geq 2$ and $m<2^n$, there is a finite sequence $\vec{R} = \sequence{R_0,\ldots,R_{n-1}}$ such that for all partitions $A_0,\ldots,A_{m-1}$ of $\omega$ arithmetical in $\vec{R}$, there is an infinite subset of some $A_j$ that computes no $\vec{R}$-cohesive set.
\end{theorem}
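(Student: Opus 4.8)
The plan is to choose $\vec R=\sequence{R_0,\dots,R_{n-1}}$ sufficiently generic (for Cohen forcing), so that each of the $2^n$ Boolean combinations (atoms) of $R_0,\dots,R_{n-1}$ is infinite; write $B_\sigma=\bigcap_{i<n}R_i^{\sigma(i)}$, $\sigma\in 2^n$, where $R_i^1=R_i$ and $R_i^0=\overline{R_i}$. The first point is a reduction: since $\vec R$ is finite, a set is $\vec R$-cohesive exactly when it is infinite and almost contained in a single atom, so a set computes an $\vec R$-cohesive set if and only if it computes an infinite subset of some $B_\sigma$. Hence it suffices to show that for every partition $A_0,\dots,A_{m-1}$ of $\omega$ arithmetical in $\vec R$ there are a $j<m$ and an infinite $G\subseteq A_j$ such that, for every $e$, $\Phi_e^G$ is not an infinite subset of any atom.

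A pigeonhole step then locates the relevant piece. For each $\sigma\in 2^n$ some $A_j$ has $A_j\cap B_\sigma$ infinite; let $f(\sigma)$ be the least such $j$. Since $m<2^n$, there are distinct $\sigma,\tau$ with $f(\sigma)=f(\tau)=:j^*$, so that $A_{j^*}\cap B_\sigma$ and $A_{j^*}\cap B_\tau$ are both infinite. Put $X_0=(A_{j^*}\cap B_\sigma)\cup(A_{j^*}\cap B_\tau)$, an infinite subset of $A_{j^*}$; the set $G$ will be built as an infinite subset of $X_0$ that meets both $B_\sigma$ and $B_\tau$ infinitely. (Meeting both atoms already ensures $G$ is not itself $\vec R$-cohesive; the work is in stopping $G$ from \emph{computing} an $\vec R$-cohesive set.)

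The construction of $G$ is a Mathias-style forcing. Conditions are pairs $(F,X)$ where $F$ is a finite subset of $X_0$ and $X$ is an infinite subset of $X_0$ with $\min X>\max F$, meeting both $B_\sigma$ and $B_\tau$ infinitely and satisfying a further constraint described below; extension is $F\subseteq F'\subseteq F\cup X$ with $X'\subseteq X$, and $G=\bigcup\{F:(F,X)\text{ in the generic filter}\}$. The requirements that $G$ be infinite and meet each of $B_\sigma$ and $B_\tau$ infinitely are met by trivial density arguments, as any condition can be extended by moving an element of $X\cap B_\sigma$ or of $X\cap B_\tau$ into $F$ and truncating $X$. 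The substantive requirement, for each $e$ and each $\rho\in 2^n$, is that the set $D_{e,\rho}$ of conditions forcing ``$\Phi_e^G$ is not an infinite subset of $B_\rho$'' is dense; meeting all of these gives the desired $G$.

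Establishing the density of $D_{e,\rho}$ is where Seetapun's method is adapted and where essentially all the difficulty lies. Given a condition $(F,X)$, there are two cases. If some extension forces $\Phi_e^G$ to fail to be a subset of $B_\rho$---for instance because, for the new finite part $F'$, $\Phi_e^{F'}$ halts with output $1$ at some $x\notin B_\rho$---then one passes to that extension, truncating the reservoir past the use of the witnessing computation so as to preserve it. Otherwise every finite extension keeps all of $\Phi_e$'s outputs inside $B_\rho$, and the goal becomes to force $\Phi_e^G$ to be finite. The danger is that $\Phi_e$ keeps yielding new (necessarily $B_\rho$-valued) outputs no matter how the reservoir is shrunk, for then the reservoir would uniformly enumerate, and hence compute, an infinite subset of $B_\rho$. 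This is ruled out by maintaining throughout the construction the invariant that no reservoir computes an infinite subset of any atom---this is the extra constraint on reservoirs---whereupon the second case yields a condition confining $\Phi_e^G$ to a fixed finite set. Checking that this invariant holds of the initial reservoir $X_0$ and survives the reservoir surgery needed to meet the requirements---the analogue here of the reservoir-splitting lemma at the core of Seetapun's argument---is the principal obstacle, and it is at this point that the genericity of $\vec R$ is essential: it ensures that no set arithmetical in $\vec R$ and contained in $B_\sigma\cup B_\tau$ computes an infinite subset of any atom.
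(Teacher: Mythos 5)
There is a genuine gap, and it sits exactly where you locate ``the principal obstacle'': the invariant that reservoirs compute no infinite subset of any atom cannot be secured for your initial reservoir, and the genericity fact you invoke for it is false. Indeed, $B_\sigma$ itself is arithmetical in $\vec{R}$ and contained in $B_\sigma\cup B_\tau$, and it is an infinite subset of an atom. More damagingly, the adversary can choose the partition so that \emph{every} candidate $X_0=(A_{j}\cap B_\sigma)\cup(A_{j}\cap B_\tau)$ violates the invariant. Fix a bijection $c\colon 2^n\to 2^n$ and let $A_0=\{x: x\equiv c(\mathrm{atom}(x)) \bmod 2^n\}$ and $A_1=\omega\setminus A_0$, where $\mathrm{atom}(x)$ denotes the atom containing $x$; this partition is computable from $\vec{R}$ and every $A_j\cap B_\sigma$ is infinite. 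Every infinite subset of $A_0$ computes a cohesive set outright (intersect it with a residue class modulo $2^n$ on which it is infinite; that intersection lies in a single atom), so any successful choice must be $j^*=1$; but even for $j^*=1$ and any $\sigma\neq\tau$, the set $X_0$ computes $X_0\cap\{x\equiv c(\tau)\}=B_\sigma\cap\{x\equiv c(\tau) \bmod 2^n\}$, an infinite subset of $B_\sigma$. So the second case of your density argument, which needs ``the reservoir computes an infinite subset of $B_\rho$'' to be a contradiction, has nothing to contradict. Note also that demanding an initial reservoir inside some $A_j$ that meets two atoms infinitely and computes no infinite subset of any atom is essentially demanding the conclusion of the theorem, so even where the claim is not false the argument is circular.

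The underlying problem is the order of construction. You fix $\vec{R}$ Cohen generic first and then run a Mathias forcing against a fully formed partition, which can ``tag'' each element with its atom as above. The paper instead builds $\vec{R}$ and the candidate subsets simultaneously: conditions carry a Cohen part $\sigma^p$ (an initial segment of $R_0\oplus\cdots\oplus R_{n-1}$) alongside Mathias parts whose finite stems are forced into the pieces $\Gamma^G_{i,j}$, and in the key combinatorial lemma one first locates computations $\Delta_j^{F}(y)\downarrow=1$ at points $y>|\sigma^p|$ where $\vec{R}$ is not yet decided, and only then extends $\sigma^p$ to place those $y$ into whichever of the $2^n$ atoms the diagonalization currently needs. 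That freedom to decide $R_k(y)$ \emph{after} the computation appears---unavailable once $\vec{R}$ is fixed---together with a Seetapun-style tree argument handling all $m$ candidate pieces at once, is what makes the count $m<2^n$ do its work. Your reduction to atoms and your pigeonhole step are fine, and your two-case dichotomy for a single requirement is the right local move, but the approach of fixing $\vec{R}$ in advance cannot be completed.
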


\noindent The proof can then be iterated for different values of $n$ to also yield:

\begin{theorem}\label{theorem_general}
There is a sequence $\vec{R} = \sequence{R_i : i \in \omega}$ such that for all $m$ and all partitions $A_0,\ldots,A_{m-1}$ of $\omega$ arithmetical in $\vec{R}$, there is an infinite subset of some $A_j$ that computes no $\vec{R}$-cohesive set.
\end{theorem}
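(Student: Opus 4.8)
The plan is to iterate the construction of Theorem \ref{theorem_main} over all $n \geq 2$ at once. Partition $\omega$ computably into infinitely many infinite pieces $B_2, B_3, \ldots$ and fix computable bijections $B_n \cong \omega$. On $B_n$ I will build a sequence $\vec{R}^{(n)} = \sequence{R^{(n)}_0, \ldots, R^{(n)}_{n-1}}$ of $n$ subsets of $B_n$, playing the role of the witness that Theorem \ref{theorem_main} supplies for the parameter $n$ (and hence, simultaneously, for every $m < 2^n$). The desired $\vec{R}$ is the concatenation of the $\vec{R}^{(n)}$, so that, as a subset of $\omega$, it is (up to computable isomorphism) their join. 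The elementary fact used throughout is that every set cohesive for $\vec{R}$ is cohesive for each $\vec{R}^{(n)}$, since the condition defining $\vec{R}$-cohesiveness ranges over a list that includes the finitely many sets from block $n$.

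Granting the construction, the theorem follows at once. Let $m$ be given and let $A_0, \ldots, A_{m-1}$ be a partition of $\omega$ arithmetical in $\vec{R}$. Fix $n \geq 2$ with $2^n > m$ and transport the partition to $B_n$: the sets $A_0 \cap B_n, \ldots, A_{m-1} \cap B_n$ partition $B_n \cong \omega$ into at most $m < 2^n$ pieces, and this partition is again arithmetical in $\vec{R}$. By the property with which $\vec{R}^{(n)}$ was endowed, there are a $j < m$ and an infinite set $G \subseteq A_j \cap B_n$ computing no $\vec{R}^{(n)}$-cohesive set. Since $A_j \cap B_n \subseteq A_j$ and every $\vec{R}$-cohesive set is $\vec{R}^{(n)}$-cohesive, $G$ is an infinite subset of $A_j$ computing no $\vec{R}$-cohesive set, as required.

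The one point needing care is that the partition of $B_n$ just used is arithmetical in \emph{all} of $\vec{R}$, so it may depend on blocks other than $n$, including ones built ``after'' block $n$; one therefore cannot simply run the proof of Theorem \ref{theorem_main} on block $n$ in isolation. The remedy is to build all the $\vec{R}^{(n)}$ by a single forcing --- for concreteness, the finite-support product of the forcings from the proof of Theorem \ref{theorem_main}, one factor per block, with the requirements on each factor relativized to the remaining factors. That is, one runs a single generic construction of $\vec{R}$ that, for every block $n$ and every arithmetical operator $\Theta$ yielding an $m$-partition with $m < 2^n$, also builds a generic infinite set $G \subseteq B_n$ contained in some piece of $\Theta^{\vec{R}}$ such that $\Psi^G$ is partial or fails to be $\vec{R}^{(n)}$-cohesive for every Turing functional $\Psi$ --- the requirement being vacuous unless $\Theta^{\vec{R}}$ is a total $m$-partition. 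This is still a countable list of requirements, each met on a dense set of conditions exactly as in Theorem \ref{theorem_main}; the only change is that the dense set attached to a block-$n$ requirement is now arithmetical in the approximations to all the blocks rather than to block $n$ alone. Because each ingredient of the proof of Theorem \ref{theorem_main} --- deciding whether an arithmetical operator yields a total partition, locating a piece of a partition that carries a large generic subset, forcing a putative cohesive-set computation to diverge or to compute a non-cohesive set --- is arithmetical, it survives this combination, and a sufficiently generic filter delivers $\vec{R}$.

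The principal obstacle is precisely this coordination: ensuring that the Seetapun-style extraction of the sets $G$ on block $n$ still works when the relevant partition is arithmetical in the whole of $\vec{R}$ rather than in $\vec{R}^{(n)}$ alone. Provided the forcing of Theorem \ref{theorem_main} is presented so that its dense sets are uniformly arithmetical in the ambient parameters --- which its proof should yield after at most cosmetic changes --- the product construction and the verification above require no essentially new ideas.
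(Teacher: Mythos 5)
There is a genuine flaw in the way you decompose the construction. You place the sets $R^{(n)}_0,\ldots,R^{(n)}_{n-1}$ inside pairwise disjoint computable blocks $B_n \subseteq \omega$. But then $\vec{R}$, the concatenation of all these sets, has a \emph{computable} cohesive set: any infinite set $T$ meeting each $B_n$ in at most one point satisfies $T \cap R^{(n)}_k \subseteq T \cap B_n$, which is finite, for every $n$ and $k$, so $T$ is $\vec{R}$-cohesive. Hence every infinite set computes an $\vec{R}$-cohesive set, and no $\vec{R}$ of this shape can witness the theorem. The same problem surfaces locally in your bridging claim that ``every set cohesive for $\vec{R}$ is cohesive for each $\vec{R}^{(n)}$'': that claim is only useful if cohesiveness for $\vec{R}^{(n)}$ is read inside the copy $B_n \cong \omega$, i.e., for infinite subsets of $B_n$, and an $\vec{R}$-cohesive set need not meet $B_n$ infinitely, so it need not yield any cohesive set of the copy. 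Equivalently, the Seetapun-style diagonalization on block $n$ can only control the membership pattern $\sequence{R^{(n)}_0(y),\ldots,R^{(n)}_{n-1}(y)}$ for $y \in B_n$, whereas a candidate cohesive set $\Delta^G$ may output only numbers outside $B_n$ (for instance, a transversal of the blocks), against which the block-$n$ requirements are powerless.

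The repair is to partition the \emph{index set} of the sequence $\vec{R}$ rather than the ground set $\omega$, which is what the paper does. For each $m$ one reserves $n_m$ of the columns of a single generic join $G = R_0 \oplus R_1 \oplus \cdots$, every $R_i$ being a full subset of $\omega$; the combinatorial lemma for the $m$-th batch of requirements then produces, for each of the $2^{n_m}$ membership patterns in the reserved columns, elements $y$ (arbitrary natural numbers, whose pattern can be prescribed by extending the Cohen part of the condition) lying in some $\Delta_j^{G_{i,j}}$, and the pigeonhole over $m < 2^{n_m}$ patterns goes through exactly as in Theorem \ref{theorem_main}. Your third paragraph --- running a single forcing for all blocks at once because the given partitions are arithmetical in all of $\vec{R}$ rather than in one block --- is exactly the right instinct and matches the paper; it is only the realization of the ``blocks'' as a partition of $\omega$ that breaks the argument.
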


\noindent Thus, in particular, there is no way to arithmetically transform a sequence $\vec{R}$ into a set $A$, such that every infinite subset of $A$ or $\overline{A}$ computes an $\vec{R}$-cohesive set. This is surprising, as it is easy to build an $\vec{R}$-cohesive set that is $\Delta^0_3$ (and hence certainly arithmetical) in $\vec{R}$, and clearly every infinite subset of a cohesive set is also cohesive. It follows that cohesive sets can have ``cohesive avoiding'' sets in their complements.

Our work is motivated in part by questions in reverse mathematics, where cohesive sets have been useful in the analysis of the logical strength of various principles related to Ramsey's theorem for pairs. We refer the reader to Simpson \cite{Simpson-2009} for background on reverse mathematics, and to Hirschfeldt \cite{Hirschfeldt-TA} for a survey of reverse mathematical results concerning combinatorial principles.

\begin{definition}
For a set $S$, let $[S]^2$ denote $\{ \{x,y\} : x,y \in S\}$. A coloring $f : [\omega]^2 \to k$ is \emph{stable} if for every $x$, $\lim_y f(x,y)$ exists. Fix $k, n \in \omega$; the following statements are defined in $\RCA$:
\begin{enumerate}
\item $\COH$: Every sequence has a cohesive set.
\item $\RT^2_m$: Every $f : [\N]^2 \to m$ has an infinite homogeneous set, i.e., a set $H$ such that $f \res [H]^2$ is constant.
\item $\SRT^2_m$: Every stable $f : [\N]^2 \to m$ has an infinite homogeneous set.
\item $\D^2_m$: For every set $X$, if $A_0,\ldots,A_{m-1}$ are a partition of $\N$ definable by a formula $\Delta^0_2$ in $X$, then there exists an infinite subset of some $A_j$.
\end{enumerate}
\end{definition}

\noindent The principles are related as follows. The first theorem is essentially just due to the limit lemma, but proving it with $\Sigma^0_1$ induction alone is not straightforward.

\begin{theorem}[Chong, Lempp, and Yang \cite{CLY-2010}, Theorem 1.4] For each $m$, $\SRT^2_m$ is equivalent to $\D^2_m$ over $\RCA$.
\end{theorem}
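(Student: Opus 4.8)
The plan is to prove both implications; the mathematical content is essentially the limit lemma relating stable colorings to $\Delta^0_2$ partitions, and the real work lies in carrying the argument out with only $\Sigma^0_1$ induction. The direction $\SRT^2_m \to \D^2_m$ is the more routine one. Given a set $X$ and a partition $A_0,\ldots,A_{m-1}$ of $\N$ with each piece $\Delta^0_2$ in $X$, I would use a formalized limit lemma to produce an $X$-computable $g : [\N]^2 \to m$ such that for every $x$, $\lim_y g(x,y)$ exists and equals the unique $j$ with $x \in A_j$; here one uses that the $A_j$ genuinely partition $\N$ to keep the approximations consistent. Such a $g$ is stable and exists as a set in $\RCA$, so $\SRT^2_m$ yields an infinite $H$ homogeneous for $g$, say with color $j$. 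Since $H$ is infinite, each $x \in H$ has arbitrarily large $y \in H$ with $g(x,y) = j$, and since $\lim_y g(x,y)$ exists it must be $j$; hence $H \subseteq A_j$, as $\D^2_m$ demands.

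For $\D^2_m \to \SRT^2_m$, start with a stable $g : [\N]^2 \to m$ and put $A_j = \{ x : \lim_y g(x,y) = j \}$ for $j < m$. Stability makes each $A_j$ both $\Sigma^0_2$ (``$g(x,\cdot)$ is eventually equal to $j$'') and $\Pi^0_2$ (``$g(x,\cdot)$ takes the value $j$ infinitely often''), hence $\Delta^0_2$ in $g$, and the $A_j$ partition $\N$ by uniqueness of limits. Applying $\D^2_m$ with $X = g$ gives an infinite set $H_0 \subseteq A_j$ for some $j$ --- that is, an infinite \emph{limit-homogeneous} set. It then remains to thin $H_0$ to a genuinely homogeneous set $H$ by the obvious recursion: put $\min H_0$ into $H$, and having chosen $h_0 < \cdots < h_k$, let $h_{k+1}$ be the least element of $H_0$ above $h_k$ with $g(h_l, h_{k+1}) = j$ for all $l \le k$. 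By construction $H$ is homogeneous with color $j$ and is computable from $g \oplus H_0$.

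The crux, and what I expect to be the main obstacle, is verifying in $\RCA$ that this recursion is total, i.e.\ that at each stage an admissible $h_{k+1}$ exists. For fixed $h_0,\ldots,h_k \in A_j$ each $h_l$ has a stabilization point $s_l$ beyond which $g(h_l,\cdot)$ is constantly $j$, and one wants a common bound $s = \max_{l \le k} s_l$, after which any element of $H_0$ above $\max(h_k,s)$ works; but ``for all $l \le k$ there is such an $s_l$'' is a $k$-bounded assertion of a $\Sigma^0_2$ fact, and passing to a single $s$ is exactly an instance of $\Sigma^0_2$-bounding, which is not available in $\RCA$. At a nonstandard stage $k$ the stabilization points may be cofinal and the construction can stall, leaving only a bounded monochromatic sequence of nonstandard length instead of an honest infinite homogeneous set. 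So the real content is circumventing this: either squeeze enough bounding out of the hypotheses (showing that $\D^2_m$, or this particular setup, supplies the needed $\Sigma^0_2$-collection) or replace the naive recursion by one whose totality provably needs only $\Sigma^0_1$ induction. The parallel bookkeeping issue for the first direction --- making the limit lemma itself go through on $\Sigma^0_1$ induction alone --- is the subtlety already flagged above, but it is the thinning step that I would expect to demand the real ideas.
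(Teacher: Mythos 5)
This statement is quoted in the paper as an external result of Chong, Lempp, and Yang; the paper gives no proof of it and explicitly warns that, while it is ``essentially just due to the limit lemma,'' proving it with $\Sigma^0_1$ induction alone is not straightforward. Your proposal reproduces exactly that limit-lemma translation in both directions, and your two auxiliary constructions (building a stable $g$ from the $\Delta^0_2$ partition, and thinning a limit-homogeneous set to a homogeneous one) are the right skeleton. But the proposal stops precisely where the theorem's actual content begins: you correctly diagnose that the thinning recursion needs a common stabilization bound $s = \max_{l \le k} s_l$, that obtaining it is an instance of $\Sigma^0_2$-bounding, and that $\B\Sigma^0_2$ is not available in $\RCA$ --- and then you leave unresolved which of your two suggested escape routes works and how. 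As written, your argument establishes the equivalence over $\RCA + \B\Sigma^0_2$ (and hence in $\omega$-models, where induction is free), not over $\RCA$.

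The missing idea is the one that makes the Chong--Lempp--Yang result a theorem rather than a remark: one shows that $\D^2_m$ itself implies $\B\Sigma^0_2$ over $\RCA$ (your first suggested route). Granting that, the model in which you apply $\D^2_m$ already satisfies enough collection to find the common bound $s$ and to formalize the limit lemma for the forward direction, and both of your constructions then go through as you describe. Proving that $\D^2_m$ yields $\B\Sigma^0_2$ is a genuinely separate argument (roughly: from a failure of $\Sigma^0_2$-collection one manufactures a $\Delta^0_2$ partition none of whose pieces can have an infinite subset in the model), and nothing in your proposal supplies it. So the gap is concrete: you have identified the obstacle but not removed it, and the step ``the recursion defining $h_{k+1}$ is total'' is exactly the step that fails in a model of $\RCA + \neg\B\Sigma^0_2$ unless the hidden strength of the hypothesis is first extracted.
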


\begin{theorem}[Cholak, Jockusch, and Slaman \cite{CJS-2001}, Lemma 7.11]
For each $m$, $\RT^2_m$ is equivalent to $\SRT^2_m + \COH$ over $\RCA$.
\end{theorem}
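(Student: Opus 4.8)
The plan is to prove the two implications separately; throughout we may assume $m \geq 2$. The direction $\RT^2_m \to \SRT^2_m$ needs no argument, since a stable coloring is in particular a coloring. For $\RT^2_m \to \COH$ it suffices to prove $\RT^2_3 \to \COH$, as $\RT^2_m$ implies $\RT^2_3$ over $\RCA$ (immediately if $m \geq 3$, and by the usual two-step reduction if $m = 2$). Given $\vec R = \sequence{R_i : i \in \omega}$, I would define $c \colon [\omega]^2 \to 3$ by: for $x < y$, put $c(x,y) = 2$ if $R_i(x) = R_i(y)$ for all $i < x$, and otherwise let $d(x,y)$ be the least $i < x$ with $R_i(x) \neq R_i(y)$ and put $c(x,y) = R_{d(x,y)}(y)$. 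Applying $\RT^2_3$ yields an infinite homogeneous $H = \set{h_0 < h_1 < \cdots}$, and the claim is that $H$ is $\vec R$-cohesive in each case. If $c \equiv 2$, any two members of $H$ agree on $R_i$ for every $i$ below the smaller of them, so each $R_i$ is eventually constant on $H$. If $c \equiv 1$ (the case $c \equiv 0$ being symmetric under interchanging the colors $0$ and $1$), then each pair $h_j < h_k$ in $H$ disagrees on some $R_i$ with $i < h_j$, and at the first such $i = d(h_j,h_k)$ we have $R_i(h_j) = 0$ and $R_i(h_k) = 1$; a short case analysis on triples from $H$ shows $d(h_j,h_k)$ is non-increasing in $k$ and non-decreasing in $j$. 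Hence, starting from $h_0$ and passing to later and later members of $H$, one extracts a subsequence $a_0 < a_1 < \cdots$ of $H$ and values $v_0 < v_1 < \cdots$ with $d(a_j, h_k) = v_j$ for all large $k$; since $(v_j)$ strictly increases it tends to infinity, so for each index $i$, choosing $j$ with $v_j > i$ shows all sufficiently late members of $H$ agree on $R_i$. Thus $H$ is $\vec R$-cohesive in every case, and $\COH$ follows.

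For the converse, the substantive direction, I would assume $\SRT^2_m$ and $\COH$ and take $f \colon [\omega]^2 \to m$ arbitrary. Form the $f$-computable sequence $\vec R$ indexed by pairs, with $R_{\pair{x,i}} = \set{y : f(x,y) = i}$, and apply $\COH$ to obtain an infinite $\vec R$-cohesive set $C$. The crucial point is that $\lim_{y \in C} f(x,y)$ exists for every $x$: for each $i < m$, cohesiveness gives $C \subseteq^* R_{\pair{x,i}}$ or $C \subseteq^* \overline{R_{\pair{x,i}}}$, and since the sets $R_{\pair{x,i}}$ for $i < m$ partition $\omega$ while $C$ is infinite, exactly one $i = i_x$ satisfies $C \subseteq^* R_{\pair{x,i}}$, and this $i_x$ is the limit. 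Now enumerate $C$ as $c_0 < c_1 < \cdots$ and define $h \colon [\omega]^2 \to m$ by $h(s,t) = f(c_s,c_t)$ for $s < t$; this is $\Delta^0_1$ in $C \oplus f$ and is stable, since $\lim_t h(s,t) = \lim_{y \in C} f(c_s,y) = i_{c_s}$. Applying $\SRT^2_m$ to $h$ gives an infinite homogeneous $G$, with some color $j$, and putting $H = \set{c_s : s \in G}$ we get that $H$ is infinite and that for $x < y$ in $H$, writing $x = c_s$ and $y = c_t$ with $s < t$ in $G$, $f(x,y) = h(s,t) = j$; so $H$ is an infinite homogeneous set for $f$, which gives $\RT^2_m$.

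The combinatorics here are elementary, so the single idea to be found is the coloring $c$ above: it is the use of non-stable colorings of pairs that makes a cohesive set extractable, whereas whether $\SRT^2_2$ alone implies $\COH$ is exactly the open problem mentioned in the abstract. The care that is needed is in carrying everything out over $\RCA$. In the forward direction, the bootstrapping that turns ``$k \mapsto d(h_j,h_k)$ is eventually constant'' into ``$H$ is $\vec R$-cohesive'' --- the latter a $\Pi^0_3$ assertion --- must be arranged to use only $\Sigma^0_1$ induction, and the reduction $\RT^2_2 \to \RT^2_3$ should be written out as an explicit finite iteration; I expect this bookkeeping to be the chief, if routine, obstacle. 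In the converse direction, by contrast, nothing past the applications of $\COH$ and $\SRT^2_m$ uses more than $\Delta^0_1$ comprehension relative to $C \oplus f$ and to $C \oplus G$ --- in particular the function $x \mapsto i_x$ need never be formed as an object --- so I anticipate no genuine difficulty there.
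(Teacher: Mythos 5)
This statement is quoted from Cholak--Jockusch--Slaman and the paper gives no proof of it, so there is nothing internal to compare against; judged on its own, your argument is correct and is essentially the canonical proof of Lemma 7.11. The backward direction ($\SRT^2_m + \COH \to \RT^2_m$) is exactly the standard argument: cohesiveness for the sets $\set{y : f(x,y)=i}$ forces $\lim_{y \in C} f(x,y)$ to exist, the reindexed coloring on $C$ is stable, and a homogeneous set pulls back. The forward direction is also the standard ``first point of disagreement'' coloring, just split into three colors rather than packaged as a lexicographic $2$-coloring. Two small remarks. First, the one step you assert without justification is that the limit values $v_j$ can be taken \emph{strictly} increasing: you need the observation that if $\lim_k d(a_j,h_k)$ took the same value $e$ for two successive $a_j < a_{j'}$ chosen far enough apart, then homogeneity for color $1$ would force $R_e(a_{j'})$ to be both $0$ and $1$; this is one line but it is the crux of why the limits are unbounded. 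Second, the $\RCA$ bookkeeping you worry about in the forward direction can be avoided entirely: for fixed $i$, homogeneity for color $1$ makes the map $k \mapsto \sequence{R_0(h_k),\ldots,R_i(h_k)}$ lexicographically non-decreasing on $\set{h_k > i}$, and a lex-non-decreasing sequence in the finite order $2^{i+1}$ is eventually constant by finite pigeonhole, which needs only $\Sigma^0_1$ induction; this gives cohesiveness of $H$ directly without extracting any subsequence or forming the function $j \mapsto v_j$.
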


A long-standing open question, recently answered in the negative by Chong, Slaman, and Yang \cite{CSY-TA}, asked whether $\SRT^2_m$ implies $\COH$ over $\RCA$, or equivalently, whether $\D^2_m$ implies $\COH$. The separation in \cite{CSY-TA} is via a highly customized model with non-standard first-order part, leaving open the possibility that the implication still holds in $\omega$-models. The results of this paper can be viewed as saying that one of the standard methods for establishing this implication fails, as explained below.

Most principles studied in reverse mathematics are $\Pi^1_2$ as statements of second-order arithmetic, and as such, each has a natural class of \emph{instances}, and a natural class of \emph{solutions} to those instances. For example, the instances of $\COH$ are sequences of sets, and the solutions to a given instance $\vec{R}$ are the $\vec{R}$-cohesive sets.

\begin{definition}\label{definition_reductions}
Let $\mathsf{P}$ and $\mathsf{Q}$ be $\Pi^1_2$ statements of second-order arithmetic.
\begin{enumerate}
\item $\mathsf{P}$ is \emph{reducible} to $\mathsf{Q}$ if every instance $A$ of $\mathsf{P}$ computes an instance $B$ of $\mathsf{Q}$ such that for every solution $S$ to $B$, $S \oplus A$ computes a solution to $A$.
\item $\mathsf{P}$ is \emph{strongly reducible} to $\mathsf{Q}$ if every instance $A$ of $\mathsf{P}$ computes an instance $B$ of $\mathsf{Q}$ such that every solution to $B$ computes a solution to $A$.
\end{enumerate}
\end{definition}

\noindent A typical implication $\mathsf{Q} \to \mathsf{P}$ in $\omega$-models holds because there is a reduction of $\mathsf{P}$ to $\mathsf{Q}$, and in most cases it is actually a strong reduction. In fact, it is common for every solution to the instance $B$ of $\mathsf{Q}$ to be itself a solution to the instance $A$ of $\mathsf{P}$. For example, this is the case for the implication from $\RT^2_m$ to $\COH$ (see Mileti \cite{Mileti-2004}, Corollary A.1.4), and for most other previously studied implications between combinatorial principles, e.g., in \cite{HS-2007}, \cite{HSS-2009}, and \cite{DM-TA}.

In the parlance of Definition \ref{definition_reductions}, Theorems \ref{theorem_main} and \ref{theorem_general} then have the following consequence:

\begin{corollary}
For each $m$, $\COH$ is not strongly reducible to $\D^2_m$. In fact, $\COH$ is not strongly reducible to the principle $\forall m\,\D^2_m$.
\end{corollary}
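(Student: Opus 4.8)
The plan is to deduce the corollary directly from Theorems \ref{theorem_main} and \ref{theorem_general} by unwinding Definition \ref{definition_reductions}. Fix $m$ and suppose toward a contradiction that $\COH$ is strongly reducible to $\D^2_m$. Then every instance of $\COH$—that is, every sequence $\vec{R} = \sequence{R_i : i \in \omega}$—computes an instance of $\D^2_m$ every solution to which computes an $\vec{R}$-cohesive set. An instance of $\D^2_m$ is given by a set $X$ together with indices defining a partition $A_0,\ldots,A_{m-1}$ of $\omega$ by a formula $\Delta^0_2$ in $X$, and the solutions to this instance are exactly the infinite subsets of the pieces $A_j$.

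The one thing to check is that the partition $A_0,\ldots,A_{m-1}$ so obtained is arithmetical in $\vec{R}$. This is immediate: since $\vec{R}$ computes the instance, $X \leq_T \vec{R}$, hence $X' \leq_T \vec{R}'$; and a partition $\Delta^0_2$ in $X$ is uniformly computable from $X'$, hence from $\vec{R}'$, so it is $\Delta^0_2$—in particular arithmetical—in $\vec{R}$. Thus the assumed strong reduction would produce, for every sequence $\vec{R}$, a partition $A_0,\ldots,A_{m-1}$ of $\omega$ arithmetical in $\vec{R}$ such that every infinite subset of every $A_j$ computes an $\vec{R}$-cohesive set. But choosing $n \geq 2$ with $2^n > m$ and applying Theorem \ref{theorem_main}, we obtain a sequence $\vec{R} = \sequence{R_0,\ldots,R_{n-1}}$ for which every partition $A_0,\ldots,A_{m-1}$ of $\omega$ arithmetical in $\vec{R}$ has some $A_j$ with an infinite subset computing no $\vec{R}$-cohesive set—a contradiction. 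This proves the first assertion.

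The second assertion is handled the same way, using Theorem \ref{theorem_general} in place of Theorem \ref{theorem_main}. An instance of $\forall m\,\D^2_m$ consists of a number $m$ together with an instance of $\D^2_m$; since the numerical parameter $m$ is trivially computed by any oracle, a strong reduction of $\COH$ to $\forall m\,\D^2_m$ would yield, for every sequence $\vec{R}$, some $m$ and some partition $A_0,\ldots,A_{m-1}$ of $\omega$ arithmetical in $\vec{R}$ with every infinite subset of every $A_j$ computing an $\vec{R}$-cohesive set. Applying this to the fixed sequence $\vec{R} = \sequence{R_i : i \in \omega}$ supplied by Theorem \ref{theorem_general} gives a contradiction at once, since that single $\vec{R}$ defeats every such partition simultaneously, over all $m$.

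I do not expect any real obstacle: the corollary is a bookkeeping exercise on top of the two theorems. The only point meriting care—and the step I would be most careful to get right—is the observation that a partition which is $\Delta^0_2$ in an $\vec{R}$-computable set is arithmetical in $\vec{R}$, since this is precisely what lets the combinatorial conclusions of Theorems \ref{theorem_main} and \ref{theorem_general} be brought to bear on the partitions a hypothetical strong reduction would hand us.
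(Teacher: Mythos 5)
Your proposal is correct and matches the paper's treatment: the paper states this corollary as an immediate consequence of Theorems \ref{theorem_main} and \ref{theorem_general}, giving no further argument, and your unwinding of Definition \ref{definition_reductions} is exactly the intended reasoning. The one point you single out for care---that a partition $\Delta^0_2$ in an $\vec{R}$-computable set $X$ is arithmetical in $\vec{R}$---is indeed the only substantive step, and you handle it correctly via $X' \leq_T \vec{R}'$.
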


We present the proofs of the theorems below. The main ingredient is Lemma \ref{lemma_combinatorial}, which uses an elaboration on the method of Seetapun for proving that every computable coloring of pairs has an infinite homogeneous set that does not compute $\emptyset'$ (see \cite{SS-1995}, Theorem 2.1). A simpler proof of Seetapun's result is given in \cite{DJ-2009}, but it is less clear how to adapt that argument for our purposes.

\section{Proofs of Theorems \ref{theorem_main} and \ref{theorem_general}}

We assume familiarity with the basics of forcing in arithmetic, as outlined, e.g., in \cite[Chapter 3]{Shore-TA}. Below, we use the symbol $\forces$ to refer to the forcing relation of both Cohen forcing and the forcing notion defined in Definition \ref{definition_forcing}. Throughout, \emph{generic} will mean arithmetically generic, i.e., $n$-generic for all $n \in \omega$.

\subsection{Proof of Theorem \ref{theorem_main}}

Fix $n \geq 2$ and $m < 2^n$. We may assume $m \geq 2$, as the only partition of $\omega$ into one set is $\omega$ alone, and then it suffices to take $\omega$ as the desired subset and add any bi-immune set to $\vec{R}$. Let $\Gamma_{i,j}$ for $i \in \omega$ and $j<m$ range over all $m$-tuples of arithmetical operators. The sequence $\vec{R}$ is obtained generically for the following notion of forcing:

\begin{definition}\label{definition_forcing}
\Needspace*{3\baselineskip}%
\
\begin{enumerate}
\item A condition is a sequence of the form
\[
p = \sequence{\sigma^p, \sequence{F^p_{i,0}, \ldots, F^p_{i,m-1}, L^p_i} : i \in \omega},
\]
such that:
\begin{enumerate}
\item $\sigma^p$ is a finite binary string;
\item $F^p_{i,0}, \ldots, F^p_{i,m-1}$ are finite sets;
\item for all $j < m$ and all $x \in F^p_{i,j}$, $\sigma^p \forces x \in \Gamma^G_{i,j}$;
\item $L^p_i$ is an infinite set with $\max \bigcup_{j < m} F^p_{i,j} < \min L^p_i$.
\end{enumerate}
\item  A condition $q$ extends $p$ if $\sigma^p \preceq \sigma^q$, and for each $i$ and $j < m$, $L^q_i \subseteq L^p_i$ and $F^p_{i,j} \subseteq F^q_{i,j} \subset F^p_{i,j} \cup L^p_i$.
\end{enumerate}
\end{definition}

Each condition can thus be regarded as a Cohen condition, followed by a sequence of what are essentially Mathias conditions. (See, e.g., \cite{DJ-2009}, Definition 2.1 for a definition of Mathias forcing.) From a generic filter $\mathcal{F}$ for this notion of forcing, we obtain a set $G$ as the union of the $\sigma^p$ for $p \in \mathcal{F}$. For each $i$, we also obtain sets $G_{i,\0},\ldots,G_{i,m-1}$ as the unions of the $F^p_{i,0}, \ldots, F^p_{i,m-1}$, respectively, for $p \in \mathcal{F}$. We view $G$ as an $n$-fold join, and let $\vec{R}$ consist of the columns; that is, $G = R_0 \oplus \cdots \oplus R_{n-1}$.

\begin{lemma}\label{lemma_subset}
For each $i \in \omega$ and $j < m$, $G_{i,j} \subseteq \Gamma^G_{i,j}$.
\end{lemma}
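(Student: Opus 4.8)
The plan is to unwind the definitions and use genericity. Fix $i$ and $j<m$; I want to show $G_{i,j} \subseteq \Gamma^G_{i,j}$, i.e., every $x \in G_{i,j}$ satisfies $x \in \Gamma^G_{i,j}$. By construction, $G_{i,j} = \bigcup_{p \in \mathcal{F}} F^p_{i,j}$, so it suffices to fix a condition $p \in \mathcal{F}$ and an element $x \in F^p_{i,j}$ and show $x \in \Gamma^G_{i,j}$. By clause (1c) of Definition \ref{definition_forcing}, the condition $p$ satisfies $\sigma^p \forces x \in \Gamma^G_{i,j}$, where $\forces$ here is the Cohen forcing relation on the string component. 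Since $G = \bigcup_{q \in \mathcal{F}} \sigma^q$ and $\sigma^p \preceq \sigma^q$ for all $q$ extending $p$ in $\mathcal{F}$, the string $\sigma^p$ is an initial segment of $G$; and since $\mathcal{F}$ is generic, in particular its string part $G$ is a Cohen-generic real extending $\sigma^p$.

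The key step is then the soundness (or truth) property of the forcing relation: if $\sigma^p \forces x \in \Gamma^G_{i,j}$ and $G$ is sufficiently generic with $\sigma^p \prec G$, then $x \in \Gamma^G_{i,j}$ actually holds of $G$. This is a standard fact about arithmetical forcing — it is exactly the reason one works with generics in the first place — and is covered by the background material in \cite[Chapter 3]{Shore-TA} that the section header invokes. One small point worth stating explicitly: $\Gamma_{i,j}$ is an arithmetical operator, so the statement ``$x \in \Gamma^G_{i,j}$'' is an arithmetical statement about $G$, and our generic is $n$-generic for all $n$, hence decides and correctly computes all arithmetical facts about $G$ via Cohen forcing. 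Thus from $\sigma^p \forces x \in \Gamma^G_{i,j}$ we conclude $x \in \Gamma^G_{i,j}$, as desired.

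There is essentially no obstacle here; this lemma is a bookkeeping observation that the $F^p_{i,j}$ components of conditions were defined precisely so as to force membership in $\Gamma^G_{i,j}$, and that genericity converts forced facts into true ones. The only thing to be slightly careful about is that the forcing relation $\forces$ appearing in clause (1c) is the \emph{Cohen} forcing relation (depending only on $\sigma^p$), not the forcing relation of the notion in Definition \ref{definition_forcing}; the excerpt flags that the symbol $\forces$ is used for both, so the proof should note that the relevant instance is the Cohen one and that the string part of a generic filter for Definition \ref{definition_forcing} is Cohen-generic (any string extension is attainable by extending a condition, leaving the $F$'s and $L$'s essentially untouched). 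Once that is observed, the conclusion is immediate.
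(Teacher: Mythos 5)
Your proof is correct and follows exactly the paper's argument: locate a condition $p\in\mathcal{F}$ with $x\in F^p_{i,j}$, invoke clause (1c) to get $\sigma^p\forces x\in\Gamma^G_{i,j}$, and use the fact that $\sigma^p$ is an initial segment of the Cohen-generic $G$ to convert the forced statement into a true one. The extra remarks about the overloaded $\forces$ symbol and the Cohen-genericity of the string part are accurate elaborations of what the paper leaves implicit.
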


\begin{proof}
This is immediate from the definition of condition. Given $x \in G_{i,j}$, there is a condition $p$ in $\mathcal{F}$ with $x \in F^p_{i,j}$. Then $\sigma^p \forces x \in \Gamma^G_{i,j}$, and as $\sigma^p$ is an initial segment of $G$, which is clearly Cohen generic, this implies that $x$ belongs to $\Gamma^G_{i,j}$.
\end{proof}

In the sequel, all computations from a finite oracle $F$ will be assumed to have use bounded by $\max F$. The main component of our argument is the following combinatorial lemma, whose proof we leave for the next section. Given a condition $p$ and numbers $i$ and $j$, say a set $S$ \emph{satisfies} $\sequence{F^q_{i,j},L^q_i}$ if $F^q_{i,j} \subseteq S \subseteq F^q_{i,j} \cup L^q_i$.

\begin{lemma}\label{lemma_combinatorial}
Let $i \in \omega$, $u_0,\ldots,u_{n-1} \in \{0,1\}$, and reductions $\Delta_0,\ldots,\Delta_{m-1}$ be given. Let $p$ be a condition forcing that $\Gamma^G_{i,0},\ldots,\Gamma^G_{i,m-1}$ partition $\omega$. Then there is a $q \leq p$ and a $j < m$ such that $p \not\forces G_{i,j} \text{ is finite}$ and one of the following is true:
\begin{enumerate}
\item if $S$ is any set satisfying $\sequence{F^q_{i,j},L^q_i}$, then $\Delta_j^S$ is not an infinite set;
\item there is a $y > |\sigma^p|$ such that $\Delta_j^{F^q_{i,j}}(y) \downarrow = 1$ and $R_k(y) = u_k$ for all $k < n$.
\end{enumerate}
\end{lemma}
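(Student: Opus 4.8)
The plan is to dichotomize on whether a single ``diagonalization'' move is available. Write $J$ for the set of $j<m$ with $p\not\forces G_{i,j}$ is finite; a density argument (successively feed elements of $L^p_i$ into whichever part the partition hypothesis lets us force them into, then apply the pigeonhole principle to the $m$ parts) shows $J\neq\emptyset$, and the $j$ produced by the lemma will always lie in $J$, so that its side condition is automatic.

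Call a triple $\pair{j,E,y}$ \emph{good} if $j\in J$, $E\subseteq L^p_i$ is finite, there is $\tau\succeq\sigma^p$ with $\tau\forces F^p_{i,j}\cup E\subseteq\Gamma^G_{i,j}$, $y>|\sigma^p|$, $\Delta_j^{F^p_{i,j}\cup E}(y)\downarrow=1$, and some such $\tau$ is consistent with coding $R_k(y)=u_k$ for all $k<n$ (i.e.\ for each $k<n$, if $\tau$ reaches the bit of $G$ coding $R_k(y)$, then that bit of $\tau$ is $u_k$). If a good triple exists, I would take $q$ to agree with $p$ except that $\sigma^q$ is the extension of a witnessing $\tau$ that codes $R_k(y)=u_k$ for all $k<n$, that $F^q_{i,j}=F^p_{i,j}\cup E$, and that $L^q_i$ is $L^p_i$ truncated above $\max(E\cup\bigcup_{j'<m}F^p_{i,j'})$. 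Monotonicity of $\forces$ together with the consistency clause make $q$ a condition extending $p$ (and extending $\sigma^p$ preserves every membership constraint $\sigma^q\forces x\in\Gamma^G_{i',j'}$), while $\Delta_j^{F^q_{i,j}}(y)\downarrow=1$ with $R_k(y)=u_k$ for all $k<n$, so alternative~(2) holds with this $j\in J$.

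Now suppose no good triple exists. Then for each $j\in J$ and each finite $E\subseteq L^p_i$ forceable (with $F^p_{i,j}$) into $\Gamma^G_{i,j}$, the function $\Delta_j^{F^p_{i,j}\cup E}$ takes the value $1$ only finitely often: any $1$ occurring at a $y$ large compared with the length of a shortest forcing string for $F^p_{i,j}\cup E$ would, with a suitable extension of that string, complete a good triple. I want to extract alternative~(1). Fix any $j\in J$. Because $p$ forces the $\Gamma^G_{i,\cdot}$ to partition $\omega$ and $p\not\forces G_{i,j}$ is finite, the tree of finite subsets of $L^p_i$ that can be forced (with $F^p_{i,j}$) into $\Gamma^G_{i,j}$ has arbitrarily tall nodes — else $G_{i,j}$ would be forced finite — and it has an infinite branch, realized by $G_{i,j}\cap L^p_i$ in any generic extension of $p$ making $G_{i,j}$ infinite. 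Fix such a branch $L'\subseteq L^p_i$: every finite subset of $L'$, with $F^p_{i,j}$, is forceable into $\Gamma^G_{i,j}$, so the associated $\Delta_j$-computation has finite $1$-set. The remaining step is a Seetapun-style thinning: I would choose $F^q_{i,j}=F^p_{i,j}\cup E_0$ for an appropriate finite $E_0\subseteq L'$ and an infinite $L^q_i\subseteq L'$ so that adjoining \emph{any} finite subset of $L^q_i$ to the oracle $F^q_{i,j}$ produces no $\Delta_j$-value $1$ beyond a fixed bound; then every $S$ satisfying $\pair{F^q_{i,j},L^q_i}$ has $\Delta_j^S$ with bounded, hence finite, $1$-set, which is~(1). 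The rest of $p$ (all components $i'\neq i$, and $F^q_{i,j'}=F^p_{i,j'}$ for $j'\neq j$) is left untouched.

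I expect the thinning of the last paragraph to be the main obstacle; this is where Seetapun's method has to be genuinely adapted. The difficulty is twofold. A set $S$ with $F^q_{i,j}\subseteq S\subseteq F^q_{i,j}\cup L^q_i$ need not have $S\setminus F^q_{i,j}$ forceable into $\Gamma^G_{i,j}$, so the ``finitely often $1$'' conclusion of the no-good-triple case cannot be applied to $S$ directly — this is precisely why one first passes to a branch $L'$ all of whose finite subsets are forceable into $\Gamma^G_{i,j}$, and that passage needs both $j\in J$ and the partition hypothesis. Moreover, even along $L'$ the bound on the $1$-set of $\Delta_j^{F^p_{i,j}\cup E}$ depends a priori on $E$, so $L'$ must be thinned further, and $E_0$ chosen, so that these bounds cannot escape to infinity as the finite part of the reservoir grows. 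Interleaving the ``forcing'' thinning (keeping finite pieces forceable into $\Gamma^G_{i,j}$) with the ``combinatorial'' thinning (keeping the $\Delta_j$-$1$-sets bounded) is the delicate point, and is presumably why this lemma is isolated from the rest of the argument.
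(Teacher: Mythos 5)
Your dichotomy itself is sound --- indeed it is equivalent to the paper's case split, since the paper's ``otherwise'' case produces exactly a good triple in your sense, so ``no good triple'' forces one of the paper's two avoidance cases --- and your handling of the good-triple horn is essentially correct. The genuine gap is the entire second horn. First, ``fix any $j\in J$'' is false: alternative (1) can be unachievable for a particular $j\in J$ even when no good triple exists. Take $n=m=2$, $u_0=1$, $\Gamma^G_{i,0}=\{x: G(2x)=0\}$ and $\Gamma^G_{i,1}=\{x:G(2x)=1\}$ (which always partition $\omega$, and both indices lie in $J$), let $\Delta_0^F$ output $1$ exactly on $F\setminus F^p_{i,0}$, and let $\Delta_1$ never output $1$. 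Any $\tau$ forcing $y\in\Gamma^G_{i,0}$ must set the bit coding $R_0(y)$ to $0\neq u_0$, so no good triple exists for $j=0$ (nor for $j=1$, vacuously); yet for \emph{every} condition $q$ and every infinite reservoir, $\Delta_0^{F^q_{i,0}\cup L^q_i}\supseteq L^q_i$ is infinite, so (1) is simply unattainable at $j=0$. The lemma holds here only via $j=1$. This shows the correct $j$ cannot be chosen color-by-color; in the paper it emerges from a construction (the nested $\Delta_j$-sequences and $\Delta_j$-trees) that interleaves all $m$ colors, precisely because which color admits an avoidance reservoir depends on how computation witnesses for the earlier colors are distributed.

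Second, even for a correctly chosen $j$, your quantitative control is too weak to support the thinning. The no-good-triple hypothesis bounds the $1$-set of $\Delta_j^{F^p_{i,j}\cup E}$ only by (roughly) $\ell_E/n$, where $\ell_E$ is the length of a shortest string forcing $E$ into $\Gamma^G_{i,j}$; in the example above this bound is attained ($\ell_E\approx 2\max E$ and the $1$-set is $E$), so the bounds genuinely escape to infinity along any infinite reservoir of forceable sets and no choice of $E_0$ and $L^q_i\subseteq L'$ can cap them. The paper's alternative (1) is obtained quite differently: the reservoir (an infinite path through a $\Delta_j$-tree, or a tail of $L^p_i$ when no $\Delta_j$-sequence exists) admits \emph{no} convergent $\Delta_j$-computation with output $1$ at any $y>|\sigma^p|$ from \emph{any} finite subset, forceable or not, so no bound needs to be tracked at all. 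The step you flag as ``the main obstacle'' is therefore not a routine Seetapun thinning to be filled in later: it is the whole content of the lemma, it cannot be carried out within the single-color, forceable-sets framework you set up, and it is exactly what the tree machinery of the paper's proof exists to replace.
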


To complete the proof of Theorem \ref{theorem_main}, fix $i \in \omega$ and suppose $\Gamma^G_{i,0},\ldots,\Gamma^G_{i,m-1}$ partition $\omega$. By Lemma \ref{lemma_subset}, to exhibit an infinite subset of one of the $\Gamma^G_{i,j}$ that computes no $\vec{R}$-cohesive set, it suffices to prove the following:

\begin{lemma}\label{lemma_end}
One of the sets $G_{i,0},\ldots,G_{i,m-1}$ is infinite and computes no $\vec{R}$-cohesive set.
\end{lemma}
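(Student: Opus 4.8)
The plan is to keep Lemma~\ref{lemma_combinatorial} as a black box and to wrap around it a forcing construction together with a counting argument over the $2^n$ Boolean atoms $\set{z : R_k(z) = u_k \text{ for all } k < n}$, $\sequence{u_0,\ldots,u_{n-1}} \in \set{0,1}^n$. Since we are in the case where $\Gamma^G_{i,0},\ldots,\Gamma^G_{i,m-1}$ partition $\omega$ and $\mathcal{F}$ is generic, some condition of $\mathcal{F}$ forces this (arithmetical) fact, and I would carry out the whole construction below that condition. Fix an enumeration $\sequence{(\vec{\Delta}_s, \vec{u}_s) : s \in \omega}$ of all pairs consisting of an $m$-tuple $\vec{\Delta}=\sequence{\Delta_0,\ldots,\Delta_{m-1}}$ of reductions and an atom $\vec{u}=\sequence{u_0,\ldots,u_{n-1}}$, with each pair appearing infinitely often, and build a descending sequence $p_0 \geq p_1 \geq \cdots$ of conditions in $\mathcal{F}$: at stage $s$, apply Lemma~\ref{lemma_combinatorial} to $p_s$, $\vec{u}_s$, and $\vec{\Delta}_s$ to obtain $q \leq p_s$ and $j_s < m$ for which clause~(1) or~(2) holds with $j = j_s$, and set $p_{s+1} = q$, lengthening $\sigma^{q}$ first, if necessary, so that it decides $R_k(y)$ for the witness $y$ of clause~(2); this makes the clause-(2) witnesses obtained at successive stages strictly increasing, hence pairwise distinct. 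Two bookkeeping observations I would record now. First, if clause~(2) holds at stage $s$ with witness $y$, then $\Delta_{j_s}^{F^{q}_{i,j_s}}(y)$ has use at most $\max F^{q}_{i,j_s} < \min L^{q}_i$, and every later condition enlarges $F_{i,j_s}$ only within $L^{q}_i$, so $\Delta_{j_s}^{G_{i,j_s}}(y)\downarrow = 1$ with $R_k(y) = u_{s,k}$ for all $k < n$; thus $y$ is a genuine element of $\Delta_{j_s}^{G_{i,j_s}}$ lying in the atom $\vec{u}_s$. Second, for the identity reduction $\operatorname{id}$ (with $\operatorname{id}^{X} = X$), clause~(1) \emph{never} holds at any stage, since $F^{q}_{i,j}\cup L^{q}_i$ satisfies $\sequence{F^{q}_{i,j},L^{q}_i}$ and is infinite; so if the lemma ever returns an index $j_s$ whose reduction is $\operatorname{id}$, then clause~(2) holds and its witness $y$ must actually lie in $G_{i,j_s}$ itself.

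Next I would suppose, towards a contradiction, that the conclusion of Lemma~\ref{lemma_end} fails, so that for every $j < m$ the set $G_{i,j}$ is finite or computes an $\vec{R}$-cohesive set. Let $J = \set{j < m : G_{i,j} \text{ is infinite}}$. For each $j \in J$, fix a reduction $\Theta_j$ such that $\Theta_j^{G_{i,j}}$ is $\vec{R}$-cohesive; being $\vec{R}$-cohesive, $\Theta_j^{G_{i,j}}$ is infinite and, for a unique atom $\vec{u}^{(j)}$ (take $u^{(j)}_k$ to record which of $\Theta_j^{G_{i,j}} \cap R_k$ and $\Theta_j^{G_{i,j}} \cap \overline{R_k}$ is finite), all but finitely many of its elements lie in $\set{z : R_k(z) = u^{(j)}_k \text{ for all } k < n}$. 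For $j \notin J$, put $\Theta_j = \operatorname{id}$. Since $|J| \leq m < 2^n$, there is an atom $\vec{u}^{*}$ different from every $\vec{u}^{(j)}$, $j \in J$.

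Finally I would run the argument at the infinitely many stages $s$ with $(\vec{\Delta}_s,\vec{u}_s) = (\sequence{\Theta_0,\ldots,\Theta_{m-1}}, \vec{u}^{*})$. At such a stage, clause~(1) cannot hold for $j_s$: if $j_s \in J$ it would force $\Theta_{j_s}^{G_{i,j_s}}$ to be finite, contradicting $\vec{R}$-cohesiveness; if $j_s \notin J$ it is ruled out by the identity-reduction observation. So clause~(2) holds, producing a fresh witness $y_s$ (distinct across stages) with $y_s \in \Theta_{j_s}^{G_{i,j_s}}$ and $R_k(y_s) = u^{*}_k$ for all $k < n$. If $j_s \notin J$, the identity-reduction observation forces $y_s \in G_{i,j_s}$, which is finite, so for each value of $j_s$ this happens only finitely often, hence for only finitely many such stages $s$ altogether. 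Therefore $j_s \in J$ for all sufficiently large stages $s$ in this subsequence, and since $J$ is finite there is a single $j^{*} \in J$ with $j_s = j^{*}$ for infinitely many of them; then $\Theta_{j^{*}}^{G_{i,j^{*}}}$ contains infinitely many distinct numbers $y_s$ with $R_k(y_s) = u^{*}_k$ for all $k < n$. But the atom $\set{z : R_k(z) = u^{*}_k \text{ for all } k < n}$ is disjoint from $\set{z : R_k(z) = u^{(j^{*})}_k \text{ for all } k < n}$, since $\vec{u}^{*} \neq \vec{u}^{(j^{*})}$, whereas all but finitely many elements of $\Theta_{j^{*}}^{G_{i,j^{*}}}$ lie in the latter --- a contradiction. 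Negating the hypothesis, some $G_{i,j}$ is infinite and computes no $\vec{R}$-cohesive set, which is what was wanted.

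The real work of the section is in Lemma~\ref{lemma_combinatorial}, where Seetapun's method enters; the argument above is only the combinatorial scaffolding, and I expect that lemma to be the main obstacle. Within the scaffolding itself the point needing the most care is the choice of the tuple $\sequence{\Theta_0,\ldots,\Theta_{m-1}}$: assigning the identity reduction to the components $G_{i,j}$ with $j \notin J$ makes clause~(2) self-terminating on those indices, which is exactly what prevents the construction from idling on finite components and channels it onto an infinite $G_{i,j}$, so that the pigeonhole over the $2^n > m$ atoms can bite. The other thing to get right --- routine but essential --- is the use and freshness bookkeeping of the first paragraph, which is what turns the abstract clause-(2) witnesses into honest, pairwise distinct elements of the relevant $\Theta_j^{G_{i,j}}$.
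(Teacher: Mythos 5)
Your argument is, at bottom, the paper's own: both rest on the density (in the sense of Lemma~\ref{lemma_combinatorial}) of conditions realizing clause~(2), followed by a pigeonhole over the $2^n$ atoms against the $m$ parts. You run the count contrapositively --- fixing putative cohesive sets $\Theta_j^{G_{i,j}}$, locating an atom $\vec{u}^{*}$ missed by all of them, and deriving a contradiction --- where the paper argues directly that some single $\Delta_j^{G_{i,j}}$ must meet two distinct atoms infinitely often; these are the same counting argument. Your identity-reduction device is a nice way to absorb the paper's separate opening step (that some $G_{i,j}$ is infinite) and to neutralize the indices $j$ with $G_{i,j}$ finite that Lemma~\ref{lemma_combinatorial} is still permitted to return.

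The one step that does not work as written is the chain construction. Lemma~\ref{lemma_combinatorial} hands you \emph{some} $q \leq p_s$, and setting $p_{s+1} = q$ gives no reason for $p_{s+1}$ to lie in the already-fixed generic filter $\mathcal{F}$ --- yet everything downstream depends on exactly that: you need $F^{p_{s+1}}_{i,j} \subseteq G_{i,j} \subseteq F^{p_{s+1}}_{i,j} \cup L^{p_{s+1}}_i$ and $\sigma^{p_{s+1}} \prec G$ in order to conclude that the witness $y_s$ really belongs to $\Theta_{j_s}^{G_{i,j_s}}$ and really lies in the atom $\vec{u}^{*}$ of the actual $\vec{R}$. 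The repair is routine and is what the paper makes explicit: for fixed $(\vec{\Delta},\vec{u})$ and bound $t$, the set of conditions satisfying clause~(1), or clause~(2) with a witness exceeding $t$, is dense below any condition forcing the partition hypothesis (that is precisely the content of the lemma, applied below a condition with $|\sigma^p| > t$), and both clauses persist under further extension --- clause~(2) because the use of $\Delta_j^{F^q_{i,j}}(y)$ is bounded by $\max F^q_{i,j} < \min L^q_i$ and later conditions only add elements of $L^q_i$, clause~(1) because any $S$ satisfying $\sequence{F^r_{i,j},L^r_i}$ for $r \leq q$ also satisfies $\sequence{F^q_{i,j},L^q_i}$. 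Hence $\mathcal{F}$ itself meets each of these dense sets, and you should draw your $p_s$ from those intersections rather than from the lemma's output directly; with that change your bookkeeping about freshness and persistence of the computations goes through as you describe.
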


\begin{proof}
First, suppose $G_{i,0},\ldots,G_{i,m-1}$ are all finite. Then there is a number $s$ and a condition $p$ in the generic filter $\mathcal{F}$ forcing that $G_{i,0},\ldots,G_{i,m-1}$ are bounded by $s$. So, for any $x > s$ in $L^p_i$, no extension of $\sigma^p$ can force $x \in \Gamma^G_{i,j}$ for any $j < m$, as otherwise we could define a condition $q \leq p$ with $x \in F^q_{i,j}$. But this means no such $\sigma$ can force that the $\Gamma^G_{i,j}$ partition $\omega$, which is a contradiction. Thus, at least one of $G_{i,0},\ldots,G_{i,m-1}$ is infinite, as wanted.

Now fix reductions $\Delta_0,\ldots,\Delta_{m-1}$ such that $\Delta_j^{G_{i,j}}$ is an infinite set whenever $G_{i,j}$ is. Given $s \in \omega$ and an $n$-tuple $u_0,\ldots,u_{n-1} \in \{0,1\}$, let $\mathcal{C}$ be the set of all conditions $p$ such that for some $j<m$ with $G_{i,j}$ infinite, there is a $y > s$ such that $\Delta_j^{F^p_{i,j}}(y) \downarrow = 1$ and $R_k(y) = u_k$ for all $k < n$. Lemma \ref{lemma_combinatorial} implies that $\mathcal{C}$ is dense in $\mathcal{F}$, so $\mathcal{C} \cap \mathcal{F} \neq \emptyset$. Since, for any $p \in \mathcal{C} \cap \mathcal{F}$ and any $j < m$, $G_{i,j}$ satisfies $\pair{F^p_{i,j},L^p_i}$, it follows that for some $j<m$ with $G_{i,j}$ infinite, there is a $y > s$ such that $\Delta_j^{G_{i,j}}(y) \downarrow = 1$ and $R_k(y) = u_k$ for all $k < n$.

It follows that for each $n$-tuple $u_0,\ldots,u_{n-1} \in \{0,1\}$, we can pick a $j < m$ with $G_{i,j}$ infinite such that for infinitely many $y$, $\Delta_j^{G_{i,j}}(y) \downarrow = 1$ and $R_k(y) = u_k$ for all $k < n$. But as $m < 2^n$, there must be two different such $n$-tuples for which we can pick the same $j$. Thus, for some $k < n$, $\Delta_j^{G_{i,j}}$ intersects both $R_k$ and $\overline{R_k}$ infinitely often, and hence $\Delta_j^{G_{i,j}}$ is not cohesive for $R_k$. As $\Delta_0,\ldots,\Delta_{m-1}$ were arbitrary, we conclude that some infinite $G_{i,j}$ computes no $\vec{R}$-cohesive set.
\end{proof}

\subsection{Proof of Theorem \ref{theorem_general}} 

We now build an infinite sequence $\vec{R} = \sequence{R_i : i \in \omega}$, and for each $m \geq 2$ reserve some number $n = n_m$ of columns of $\vec{R}$ with which to carry out the above proof. (The precise value of $n$ is inconsequential so long as $m < 2^n$.) Thus we are essentially just folding together versions of the above argument for different values of $n$ and $m$. The only difference from the ``local'' construction above is the presence of additional columns in $\vec{R}$, but as each step of the construction deals with only one particular $m$, these can always be extended arbitrarily when necessary without complication.

More precisely, for each $m$, let $\Gamma^m_{i,j}$ for $i \in \omega$ and $j<m$ range over all $m$-tuples of arithmetical operators. We modify our conditions to be sequences
\[
p = \sequence{\sigma^p, \sequence{F^{p,m}_{i,0}, \ldots, F^{p,m}_{i,m-1}, L^{p,m}_i} : m \geq 2, i \in \omega},
\]
such that for all $x \in F^{p,m}_{i,j}$, $\sigma^p \forces x \in \Gamma^{m}_{i,j}$, with the rest of the clauses, and extension, defined as in Definition \ref{definition_forcing} with the obvious modifications. The generic objects are now the set $G$, whose columns as an $\omega$-fold join serve as the members of $\vec{R}$, and for all $n,m,i \in \omega$, sets $G^m_{i,0},\ldots,G^m_{i,m-1}$. For these, analogues of Lemmas \ref{lemma_subset}, \ref{lemma_combinatorial}, and \ref{lemma_end} are easily formulated, in the case of the latter two simply by replacing all references to $\vec{R}$ with the $n_m$ many columns of $G$ reserved for $m$. \emph{Mutatis mutandis}, the lemmas can be proved by virtually the same arguments as before. We leave the details to the reader.

\section{Proof of Lemma \ref{lemma_combinatorial}}

Throughout, we assume we have fixed $i \in \omega$, $u_0,\ldots,u_{n-1} \in \{0,1\}$, reductions $\Delta_0,\ldots,\Delta_{m-1}$, and a condition $p$ as in the statement of the lemma. By passing to a stronger condition if necessary, we may assume that for all $j < m$, if $p \forces G_{i,j} \text{ is finite}$ then $p \forces (\forall x > \min L^p_i) [x \notin G_{i,j}]$. For convenience, by re-defining $m$ and re-labeling, we may also assume the indices $j < m$ in the rest of the proof range only over those $j$ for which $p \not\forces G_{i,j} \text{ is finite}$.

We shall make use of the following notions in the proof:

\begin{definition}
Fix $j < m$. For a tree $T \subseteq \omega^{<\omega}$, let $\range(T)$ denote $\bigcup_{\tau \in T} \range(\tau)$.
\begin{enumerate}
\item A \emph{$\Delta_j$-sequence} is a sequence $T_0,T_1,\ldots$ of finite subtrees of $(L^p_i)^{<\omega}$ such that $\max \range(T_k) < \min \range(T_{k+1})$ for all $k$, and if $\tau$ is a terminal node of $T_k$ there is a finite $F \subset \range(\tau \res |\tau| - 1)$ and a $y > |\sigma^p|$ such that $\Delta_j^{F^p_{i,j} \cup F}(y) \downarrow = 1$.
\item If $T_0,T_1,\ldots$ is a $\Delta_{j-1}$-sequence, then its \emph{$\Delta_j$-tree} is the set of all $\tau \in \omega^{<\omega}$ such that $\tau(k) \in \range(T_k)$ for all $k < |\tau|$, and there is no finite $F \subset \range(\tau \res |\tau| - 1)$ and no $y > |\sigma^p|$ such that $\Delta_j^{F^p_{i,j} \cup F}(y) \downarrow = 1$.
\end{enumerate}
\end{definition}

\noindent The key point of this definition is that if some $\Delta_{j-1}$-sequence has a finite $\Delta_j$-tree, then the range of each terminal node of this tree has a finite subset $F$ such that $\Delta_j^{F^p_{i,j} \cup F}(y) \downarrow = 1$ for some $y > |\sigma^p|$. (This is the reason for looking at $\range(\tau \res |\tau|-1)$ above, instead of at $\range(\tau)$ as may seem more natural.) Moreover, the finite $\Delta_j$-tree can serve as the starting point of a $\Delta_j$-sequence.

The proof divides into three cases.

\begin{case}{1}
For some $j$, there is no $\Delta_j$-sequence.
\end{case}

\noindent There must exist an $s$ such that there is no finite $F \subset L^p_i$ with $\min F > s$, and no $y > |\sigma^p|$, such that $\Delta^{F^p_{i,j} \cup F}_j(y) \downarrow = 1$. Otherwise, by identifying each such finite set $\{x_0,\ldots,x_{s-1}\}$ with the tree of all initial segments of the string $x_0 \cdots x_{s-1}$, we could build a $\Delta_j$-sequence. So let $q$ be exactly the condition $p$, except for the modification $L^q_i = \{x \in L^p_i : x > s\}$. Then if $S$ is any set satisfying $\sequence{F^q_{i,\In}, L^q_i}$, it cannot be that $\Delta^S_j(y) \downarrow = 1$ for any $y > |\sigma^p|$, so $\Delta^S_j$ is not infinite.

\begin{case}{2}
For some $j < m$, there is a $\Delta_{j-1}$-sequence with infinite $\Delta_j$-tree.
\end{case}

\noindent Let $T$ be the infinite $\Delta_j$-tree, and consider any infinite path through $T$. Let $q$ be exactly the condition $p$, except for the modification that $L^q_i$ is the range of this path. Since the path is an increasing sequence, $q$ is a condition. Then if $S$ is any set satisfying $\sequence{F^p_{i,j}, L^q_i}$, it cannot be that $\Delta^S_j(y) \downarrow = 1$ for any $y > |\sigma^p|$, so $\Delta^S_j$ is not infinite.

\begin{case}{3}
Otherwise.
\end{case}

\noindent For each $j < m$, we inductively define a $\Delta_j$ sequence $T_{j,0}, T_{j,1}, \ldots$ as follows. Let $T_{0,0}, T_{0,1}, \ldots$ be a $\Delta_0$-sequence, as exists by the failure of Case 1. Fix $j < m-1$ and $k \in \omega$, and assume that $T_{j,0},T_{j,1},\ldots$ have been defined, along with $T_{j+1,k'}$ for all $k' < k$. Choose the least $s$ such that $\max \bigcup_{k' < k} \range(T_{j+1,k'}) < \min \range(T_{j,s})$, or equivalently, let $s$ be the sum of the heights of the $T_{j+1,k'}$ for $k' < k$. Then let $T_{j+1,k}$ be the $\Delta_{j+1}$-tree of the increasing tree sequence $T_{j,s},T_{j,s+1},\ldots$, which is finite by the failure of Case 2. Clearly, $T_{j+1,0},T_{j+1,1},\ldots$ so defined is a $\Delta_{j+1}$-sequence.

We next define numbers $s_0,\ldots,s_{m-1}$ by reverse induction. Let $s_{m-1} = 1$, and having defined $s_j$ for some non-zero $j < m$, let $s_{j-1}$ be the least $s$ such that $\bigcup_{k < s_j} \range(T_{j,k}) \subseteq \bigcup_{k < s} \range(T_{j-1,k})$. Thus, essentially, $s_{j-1}$ picks out the $T_{j-1,k}$ relevant to the construction of $T_{j,0},\ldots,T_{j,s_j-1}$.

By definition, if $\tau$ is a terminal node of some $T_{j,k}$, then there is a finite $F \subset \range(\tau)$ and a $y > |\sigma^p|$ such that $\Delta_j^{F^p_{i,j} \cup F}(y) \downarrow = 1$. Let $z$ be the maximum of all such $y > |\sigma^p|$ for the $T_{j,k}$ with $j < m$ and $k < s_j$. Let $\sigma \in 2^{<\omega}$ be the extension of $\sigma^p$ of length $nz+n$ defined for $|\sigma_p| \leq k < nz+n$ by $\sigma(b) = u_k$ if $b \equiv k \mod n$. In other words, viewing $\sigma^p$ as an $n$-fold join, we extend its $k$th column to equal $u_k$ on all numbers up to $z$.

Now since $\sigma^p$ forces that $\Gamma^G_{i,0},\ldots,\Gamma^G_{i,m-1}$ partition $\omega$, we may fix an extension $\sigma'$ of $\sigma$ that, for each $x \in \bigcup_{j < m} \bigcup_{k < s_j} \range(T_{j,k})$, forces $x \in \Gamma^G_{i,j}$ for some $j$. We claim there is a $j < m$, a $k < s_j$, and a terminal node $\tau \in T_{j,k}$ such that $\sigma' \forces x \in \Gamma^G_{i,j}$ for all $x \in \range(\tau)$. If so, we can choose a finite $F \subseteq \range(\tau)$ such that $\Gamma^{F^p_{i,j} \cup F}(y) \downarrow = 1$ for some $y > |\sigma^p|$, and let $q$ be exactly the condition $p$, except for the modifications $\sigma^q = \sigma'$, $F^q_{i,j} = F^p_{i,j} \cup F$, and $L^q_i = \{x \in L^p_i : x > \max F\}$. Then $q$ is a condition, and by construction there is a $y > |\sigma^q|$ such that $\Gamma^{F^q_{i,j}}(y) \downarrow = 1$ and $R_k(y) = \sigma(ny+k) = u_k$ for all $k < n$.

To prove the claim, suppose there are no such $j$, $k$, and $\tau$ with $j < m-1$; we prove the claim holds for $j = m-1$ and $k = 0$. By choice of $\sigma'$, it suffices to show that for all $j < m$ and $k < s_j$, $T_{j,k}$ has a terminal node with range consisting entirely of numbers $x$ such that $\sigma' \forces x \notin \Gamma^G_{i,l}$ for all $l < j$. This clearly holds for $j = 0$, so suppose it also holds for some $j < m-1$. By hypothesis, if $\tau$ is the appropriate terminal node of $T_{j,k}$ for some $k < s_j$, then $\range(\tau)$ contains an $x$ such that $\sigma' \forces x \notin \Gamma^G_{i,j}$, and so $\sigma' \forces x \notin \Gamma^G_{i,l}$ for all $l < j+1$. The claim now follows for $j +1$ by the definition of the $T_{j+1,k}$ and of the sequence $s_0,\ldots,s_{m-1}$.


\begin{thebibliography}{10}

\bibitem{CJS-2001}
Peter~A. Cholak, Carl~G. Jockusch, and Theodore~A. Slaman.
\newblock On the strength of ramsey's theorem for pairs.
\newblock {\em J. Symbolic Logic}, 66(1):1--55, 2001.

\bibitem{CLY-2010}
C.~T. Chong, Steffen Lempp, and Yue Yang.
\newblock On the role of the collection principle for {$\Sigma\sp 0\sb
  2$}-formulas in second-order reverse mathematics.
\newblock {\em Proc. Amer. Math. Soc.}, 138(3):1093--1100, 2010.

\bibitem{CSY-TA}
C.~T.~Chong, Theodore~A. Slaman, and Yue Yang.
\newblock The strength of Ramsey's theorem for pairs.
\newblock To appear.

\bibitem{Cooper-1972}
\newblock S.~Barry Cooper
\newblock Jump equivalence of the $\Delta^0_2$ hyperhyperimmune sets.
\newblock {\em J. Symbolic Logic}  37(3):598--600, 1972.

\bibitem{DJ-2009}
Damir~D. Dzhafarov and Carl~G. Jockusch, Jr.
\newblock Ramsey's theorem and cone avoidance.
\newblock {\em J. Symbolic Logic}, 74(2):557--578, 2009.

\bibitem{DM-TA}
Damir~D. Dzhafarov and Carl Mummert.
\newblock On the strength of the finite intersection principle.
\newblock {\em Israel J. Math.}, to appear.

\bibitem{Hirschfeldt-TA}
Denis~R. Hirschfeldt.
\newblock Surveying the truth.
\newblock In {\em Asian Initiative for Infinity: AII Graduate Summer School}, Lect. Notes Ser. Inst. Math. Sci. Natl. Univ. Singap. World Sci. Publ., Hackensack, NJ, to appear.

\bibitem{HS-2007}
Denis~R. Hirschfeldt and Richard~A. Shore.
\newblock Combinatorial principles weaker than Ramsey's theorem for pairs.
\newblock {\em J. Symbolic Logic}, 72(1):171--206, 2007.

\bibitem{HSS-2009}
Denis~R. Hirschfeldt, Richard~A. Shore, and Theodore~A. Slaman.
\newblock The atomic model theorem and type omitting.
\newblock {\em Trans. Amer. Math. Soc.}, 361(11):5805--5837, 2009.

\bibitem{HJ-1999}
Tamara Hummel and Carl~G. Jockusch, Jr.
\newblock Generalized cohesiveness.
\newblock {\em J. Symbolic Logic}, 64(2):489--516, 1999.

\bibitem{Jockusch-1973}
Carl~G. Jockusch, Jr.
\newblock Upward closure and cohesive degrees.
\newblock {\em Israel J. Math.}, 15:332--335, 1973.

\bibitem{JS-1993}
Carl G. Jockusch, Jr. and Frank Stephan.
\newblock A cohesive set which is not high.
\newblock {\em Math. Logic Quart.} 39(4):515--530, 1993.


\bibitem{Kjos-Hanssen-2009}
Bj{\o}rn Kjos-Hanssen.
\newblock Infinite subsets of random sets of integers.
\newblock {\em Math. Res. Lett.} 16(1):103--110, 2009.

\bibitem{Mileti-2004}
Joseph~R. Mileti.
\newblock {\em Partition Theorems and Computability Theory}.
\newblock PhD thesis, University of Illinois at Urbana-Champaign, 2004.

\bibitem{SS-1995}
David Seetapun and Theodore~A. Slaman.
\newblock On the strength of ramsey's theorem.
\newblock {\em Notre Dame J. Formal Logic}, 36(4):570--582, 1995.
\newblock Special Issue: Models of arithmetic.

\bibitem{Shore-TA}
Richard~A. Shore.
\newblock Lecture notes on turing degrees.
\newblock In {\em Computational Prospects of Infinity II: AII Graduate Summer
  School}, Lect. Notes Ser. Inst. Math. Sci. Natl. Univ. Singap. World Sci.
  Publ., Hackensack, NJ, to appear.

\bibitem{Simpson-2009}
Steven~G. Simpson.
\newblock {\em Subsystems of second order arithmetic}.
\newblock Perspectives in Logic. Cambridge University Press, Cambridge, second
  edition, 2009.

\bibitem{Soare-1969}
Robert~I. Soare.
\newblock Sets with no subset of higher degree.
\newblock {\em J. Symbolic Logic}, 34:53--56, 1969.

\bibitem{Soare-1987}
Robert~I. Soare.
\newblock {\em Recursively enumerable sets and degrees}.
\newblock Perspectives in Mathematical Logic. Springer-Verlag, Berlin, 1987.
\newblock A study of computable functions and computably generated sets.

\end{thebibliography}

\end{document}